\title{
    Quantized collision invariants on the sphere
    }
\author{
    Benjamin Anwasia and Diogo Ars\'enio
    }
\abstract{%
	We show that a measurable function $g:\mathbb{S}^{d-1}\to\mathbb{R}$, with $d\geq 3$, satisfies the functional relation
	\begin{equation*}
		g(\omega)+g(\omega_*)=g(\omega')+g(\omega_*'),
	\end{equation*}
	for all admissible $\omega,\omega_*,\omega',\omega_*'\in\mathbb{S}^{d-1}$ in the sense that
	\begin{equation*}
		\omega+\omega_*=\omega'+\omega_*',
	\end{equation*}
	if and only if it can be written as
	\begin{equation*}
		g(\omega)=A+B\cdot\omega,
	\end{equation*}
	for some constants $A\in \mathbb{R}$ and $B\in\mathbb{R}^d$.
	
	Such functions form a family of quantized collision invariants which play a fundamental role in the study of hydrodynamic regimes of the Boltzmann--Fermi--Dirac equation near Fermionic condensates, i.e., at low temperatures. In particular, they characterize the elastic collisional dynamics of Fermions near a statistical equilibrium where quantum effects are predominant.
    }
\keywords{
    Boltzmann--Fermi--Dirac equation, collision invariants, kinetic theory, Cauchy's functional equation, hydrodynamic limits.
    }
\begin{document}

\section{Introduction}

Our goal is to fully determine the set of all Borel measurable functions
\begin{equation*}
	g:\partial B(0,R)\to\mathbb{R},
\end{equation*}
where $\partial B(0,R)\subset\mathbb{R}^d$, with $d\geq 2$, for some fixed value $R>0$, which satisfy the functional equation
\begin{equation}\label{quantum:collision:invariant}
	g(\omega)+g(\omega_*)=g(\omega')+g(\omega_*'),
\end{equation}
for all $\omega,\omega_*,\omega',\omega_*'\in \partial B(0,R)$ such that
\begin{equation}\label{conservation:momentum}
	\omega+\omega_*=\omega'+\omega_*'.
\end{equation}
We assume here that the sphere $\partial B(0,R)$ is equipped with its standard surface measure on Borel sets.

There are several possible ways of writing the variables $\omega'$ and $\omega_*'$ in terms of  $\omega$ and $\omega_*$; one of which is given by the formulas
\begin{equation*}
	\omega'=\frac{\omega+\omega_*}2+\frac{|\omega-\omega_*|}2{\bf n},
	\qquad
	\omega_*'=\frac{\omega+\omega_*}2-\frac{|\omega-\omega_*|}2{\bf n},
\end{equation*}
where ${\bf{n}}$ is a unit vector which is orthogonal to $\frac{\omega+\omega_*}2$, if $\omega+\omega_*\neq 0$.

This problem comes up naturally in the study of hydrodynamic limits of the Boltzmann-Fermi-Dirac equation at low temperatures, i.e., near a Fermionic condensate, which we briefly present below. In this context, the function $g(\omega)$ is a fluctuation of the distribution of particles with velocity $\omega$ and the functional equation \eqref{quantum:collision:invariant} is an expression of the property that fluctuations have reached a quantized thermodynamic equilibrium.

Specifically, in such a hydrodynamic regime, the particles which are responsible for the dynamics of the condensate are all at Fermi energy $|\omega|=R$ (defined as the difference between the highest and lowest admissible energy levels). In a collision between two such particles with ingoing velocities $\omega,\omega_*\in\partial B(0,R)$ and outgoing velocities $\omega',\omega_*'\in\partial B(0,R)$, the momentum is conserved, as expressed by \eqref{conservation:momentum}. Collisions are therefore elastic. However, they are quantized in the sense that particles remain at the Fermi energy level.

The solution to this problem, which is given in Proposition \ref{prop:quantum:2d} and Theorem \ref{thm:quantum}, below, is therefore of capital importance in the understanding of the macroscopic behavior of Fermi gases near their absolute zero temperature.

\section[Hydrodynamic regimes of the Boltzmann-Fermi-Dirac equation]{Hydrodynamic regimes of the Boltzmann--Fermi--Dirac equation at low temperatures}

In \cite{aa24bis}, we initiate the analysis of hydrodynamic limits of the Boltzmann--Fermi--Dirac equation, in a low-temperature regime, by studying its acoustic limit. More precisely, we consider solutions $f_\varepsilon(t,x,v)\geq 0$, with $(t,x,v)\in\mathbb{R}^+\times\mathbb{R}^d\times\mathbb{R}^d$, of the Boltzmann--Fermi--Dirac equation
\begin{equation*}
	(\partial_t+v\cdot\nabla_x)f_\varepsilon
	=\frac 1{\varepsilon^\kappa}Q_{BFD}(f_\varepsilon),
\end{equation*}
where $\varepsilon>0$ and $\kappa>0$ determine the rate of convergence of the Knudsen number $\varepsilon^\kappa$.

For simplicity, we omit discussing the explicit format of the Boltzmann--Fermi--Dirac operator $Q_{BFD}$. We will only mention here that it accounts for the elastic collisions between Fermions in the gas, in consistency with the fact that such particles satisfy Pauli's exclusion principle. Furthermore, its structure provides conservation properties of mass, momentum, energy and entropy (i.e., the solutions satisfy an $H$-theorem).

The existence and uniqueness of weak solutions to this equation was established by {Dolbeault} in \cite{d94} under simple and natural assumptions. The construction of solutions in a more general setting was then addressed by {Lions} in \cite{l94}. However, one should note that the solutions constructed in \cite{d94} satisfy more conservation properties than the ones from \cite{l94}.

For convenience, the solutions considered in \cite{aa24bis} are set in the framework provided by the results from \cite{d94}. Their initial data $f_\varepsilon^\mathrm{in}$ are assumed to satisfy a natural uniform bound on their relative entropy
\begin{equation*}
	\frac 1{\varepsilon^{2-\tau}}H(f_\varepsilon^\mathrm{in}|M_\varepsilon)=
	\frac 1{\varepsilon^{2-\tau}}\int_{\mathbb{R}^d\times\mathbb{R}^d}
	\bigg(\underbrace{f_\varepsilon^\mathrm{in}\log\frac{f_\varepsilon^\mathrm{in}}{M_\varepsilon}
	+(1-f_\varepsilon^\mathrm{in})\log\frac{1-f_\varepsilon^\mathrm{in}}{1-M_\varepsilon}}_{\geq 0}\bigg)
	dxdv\leq C^\mathrm{in}<\infty,
\end{equation*}
where $\tau>0$ and $M_\varepsilon$ is the Fermi--Dirac distribution given by
\begin{equation*}
	M_\varepsilon(v)=\frac 1{1+\exp\left(\frac{|v|^2-R^2}{\varepsilon^\tau}\right)}.
\end{equation*}
Note that, in the low temperature regime $\varepsilon\to 0$, the density distribution $M_\varepsilon$ converges pointwise to the Fermionic condensate
\begin{equation*}
	F(v)=\left\{
	\begin{aligned}
		&0 &&\text{if }|v|>R,
		\\
		&\frac 12 &&\text{if }|v|=R,
		\\
		&1 &&\text{if }|v|<R.
	\end{aligned}
	\right.
\end{equation*}
Thus, in order to study hydrodynamic regimes near Fermionic condensates, we consider the density fluctuations $g_\varepsilon(t,x,v)$ given by
\begin{equation*}
	f_\varepsilon=M_\varepsilon+\varepsilon g_\varepsilon.
\end{equation*}

Now, supposing that $0<\tau<1$ and $\kappa>2\tau$, and imposing only natural assumptions on the Boltzmann--Fermi--Dirac operator $Q_{BFD}$, a simplified version of the main results from \cite{aa24bis} contains the statement,
under the above uniform bound on the initial relative entropy, that the fluctuations $g_\varepsilon$ are uniformly bounded in $L^\infty(dt;L^1_\mathrm{loc}(dx;L^1((1+|v|^2)dv)))$ and, up to extraction of a subsequence, converge toward a limit point
\begin{equation*}
	\mu(t,x,v)=g\left(t,x,R\frac{v}{|v|}\right)
	dt\otimes dx\otimes \delta_{\partial B(0,R)}(v),
\end{equation*}
in the weak* topology of locally finite Radon measures. In particular, observe that the fluctuations $g_\varepsilon$ undergo a velocity concentration phenomenon on the sphere $\partial B(0,R)$. Furthermore, the limiting density $g(t,x,\omega)$ belongs to $L^\infty(dt;L^2(\mathbb{R}^d\times\partial B(0,R)))$ and is a quantum collision invariant, in the sense that it satisfies the functional equation \eqref{quantum:collision:invariant}.

This convergence result plays a fundamental role in the study of hydrodynamic regimes of Fermi gases near absolute zero and it provides the context in which the functional equation \eqref{quantum:collision:invariant} arises. The characterization of all quantum collision invariants is thus a crucial step toward a complete understanding of the macroscopic behavior of Fermionic condensates.

\section{Classical collision invariants}

The classical Boltzmann equation and its hydrodynamic regimes feature an analogous problem. Thus, we now give a brief overview of the classical version of collision invariants.

In the classical setting, collision invariants are made up of all Borel measurable functions $g:\mathbb{R}^d\to\mathbb{R}$, with $d\geq 2$, which satisfy the functional equation
\begin{equation*}
	g(v)+g(v_*)=g(v')+g(v_*'),
\end{equation*}
for all $v,v_*,v',v_*'\in \mathbb{R}^d$, such that
\begin{equation*}
	\begin{aligned}
		v+v_*&=v'+v_*',
		\\
		|v|^2+|v_*|^2&=|v'|^2+|v_*'|^2.
	\end{aligned}
\end{equation*}
Here, one of the several possible ways of representing $v'$ and $v'_*$ in terms of $v$ and $v_*$ is given by the following formulas
\begin{equation*}
	v'=\frac{v+v_*}2+\frac{|v-v_*|}2{\bf n},
	\qquad
	v_*'=\frac{v+v_*}2-\frac{|v-v_*|}2{\bf n},
\end{equation*}
where ${\bf{n}}$ is a unit vector.

It turns out that classical collision invariants comprise the kernel of the linearized Boltzmann collision operator. Furthermore, they appear naturally in the derivation of hydrodynamic limits of the Boltzmann equation where they characterize the thermodynamic equilibria of limiting density fluctuations. We refer to \cite{sr09} for a comprehensive survey of hydrodynamic limits of the Boltzmann equation.

In that classical case, it is possible to show that $g(v)$ is a collision invariant if and only if there are $A,C\in\mathbb{R}$ and $B\in\mathbb{R}^d$ such that
\begin{equation*}
	g(v)=A+B\cdot v+C|v|^2,
\end{equation*}
for all $v\in\mathbb{R}^d$.

This complete characterization of classical collision invariants has been known for a long time, at least in regular settings. Indeed, the first analysis of collision invariants was given by {Boltzmann} himself under the assumption that $g(v)$ is twice differentiable. Several other proofs have subsequently been found. Insightful historical accounts of the mathematical developments which led to the above result can be found in \cite{ac90} and \cite[Section 3.1]{cip94}.

It is to be emphasized that all existing proofs in the classical setting rely on the vector structure of the Euclidean space $\mathbb{R}^d$ and fail to carry over to the quantum setting. Nevertheless, some classical proofs are based on a reduction of the problem at hand to the study of Cauchy's functional equation, which will also be useful in the quantum case. Therefore, in the next section, we recall and discuss the main results concerning the resolution of Cauchy's functional equation.

\section{Cauchy's functional equation}

The problem of finding all continuous additive functions on the real line was first considered by Cauchy. A useful version of Cauchy's result is phrased in the following theorem.

\begin{theorem}[Cauchy's functional equation]
	Let $h:[-a,a]\to \mathbb{R}$, for some $a>0$, be such that
	\begin{equation*}
		h(x+y)=h(x)+h(y),
	\end{equation*}
	for all $x,y\in[-a,a]$, with $x+y\in [-a,a]$. If $h$ is continuous at one point $x_0\in[-a,a]$, then there exists a constant $c\in\mathbb{R}$ such that $h(x)=cx$, for all $x\in[-a,a]$.
\end{theorem}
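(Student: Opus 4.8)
The plan is to follow the classical three-stage strategy for Cauchy's equation, adapted to the bounded domain $[-a,a]$: first extract the purely algebraic consequences of additivity, then upgrade pointwise continuity to global continuity, and finally combine rational homogeneity with a density argument to pin down the linear form.

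First I would record the elementary algebraic identities. Setting $x=y=0$ gives $h(0)=0$, and taking $y=-x$ (which keeps $x+y=0$ and $-x$ inside $[-a,a]$) gives $h(-x)=-h(x)$, so $h$ is odd. A straightforward induction then yields $h(nx)=nh(x)$ for every positive integer $n$ and every $x$ with $nx\in[-a,a]$, since all the partial sums $kx$ remain in the interval and the functional relation may be applied at each step. Writing $x=q\cdot(x/q)$ inverts this to $h(x/q)=\tfrac1q h(x)$, and combining the two identities gives the rational homogeneity $h(rx)=r\,h(x)$ for every rational $r$ such that $rx\in[-a,a]$.

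The crucial observation---and the step I expect to do the real work---is that additivity forces the increment of $h$ to be translation-independent: whenever $x$, $\delta$, and $x+\delta$ all lie in $[-a,a]$, one has $h(x+\delta)-h(x)=h(\delta)$. Granting continuity of $h$ at the single point $x_0$, this identity shows that $h(\delta)=h(x_0+\delta)-h(x_0)\to 0$ as $\delta\to 0$ (using one-sided limits if $x_0$ is an endpoint, and oddness to recover the other side), so that $h$ is in fact continuous at the origin. The same increment identity then propagates continuity from $0$ to every point of $[-a,a]$, so $h$ is continuous on the whole interval. This is precisely where the hypothesis of continuity at one point is genuinely used, and it is what separates the pathological additive functions from the linear ones.

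Finally, I would set $c:=h(a)/a$ and combine the two previous steps. Rational homogeneity gives $h(ra)=r\,h(a)=c\,(ra)$ for every rational $r\in[-1,1]$, so $h$ agrees with the linear map $x\mapsto cx$ on the dense set $\{ra : r\in\mathbb{Q}\cap[-1,1]\}\subset[-a,a]$. Since both functions are continuous on $[-a,a]$, they must coincide everywhere, which yields $h(x)=cx$ as claimed. The only points requiring attention throughout are the domain constraints $nx,\,rx,\,x+\delta\in[-a,a]$, but these are harmless, since each relation is invoked only where all of its arguments already lie in the interval.
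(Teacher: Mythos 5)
Your proof is correct and follows essentially the same strategy the paper sketches: establish $\mathbb{Q}$-homogeneity from additivity, use continuity at the single point $x_0$ (via the translation-invariance of increments) to obtain continuity everywhere, and then conclude by density of the rational multiples of $a$. The domain constraints on $[-a,a]$ are handled carefully throughout, so there is no gap.
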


The proof of the above result is by now a standard exercise often featured in modern courses on mathematical analysis. It consists in establishing first that $h$ is $\mathbb{Q}$-linear, in the sense that $h(\alpha x)=\alpha h(x)$, for all $\alpha\in\mathbb{Q}$ and $x\in[-a,a]$ such that $\alpha x\in [-a,a]$, and then extending this property to all $\alpha\in\mathbb{R}$ by exploiting the continuity of $h$ at one point.

This result is essential in the characterization of collision invariants. However, in order to reach results which are more broadly applicable, it is now important to extend it to all measurable functions. Historically, such extensions have attracted considerable interest, as illustrated by the articles \cite{banach20}, \cite{sierpinski20} and \cite{steinhaus20}, and are a consequence of the automatic continuity of homomorphisms. We refer to \cite{r09} for a modern survey of results on automatic continuity.

For the sake of completeness, we give now a brief presentation of Steinhaus's approach to automatic continuity. We begin by recalling an important lemma on the topology of measurable sets known as Steinhaus's Theorem.

\begin{lemma}[Steinhaus's Theorem, \cite{steinhaus20}]
	Let $A\subset\mathbb{R}^d$, with $d\geq 1$, be a Borel measurable set of positive measure. Then, the set
	\begin{equation*}
		A-A=\{x-y\in\mathbb{R}^d : x,y\in A\}
	\end{equation*}
	is a neighborhood of the origin.
\end{lemma}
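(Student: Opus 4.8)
The plan is to recast the set-theoretic statement as an analytic one. A point $z\in\mathbb{R}^d$ belongs to $A-A$ precisely when $A\cap(A+z)\neq\emptyset$, since the identity $z=x-y$ with $x,y\in A$ is equivalent to $x\in A\cap(A+z)$. Accordingly, I would introduce the overlap function
\begin{equation*}
	\phi(z)=\bigl|A\cap(A+z)\bigr|=\int_{\mathbb{R}^d}\mathbf 1_A(x)\,\mathbf 1_A(x-z)\,dx,
\end{equation*}
and aim to show that $\phi$ is continuous and strictly positive near the origin; any neighborhood on which $\phi>0$ will then be contained in $A-A$.

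First I would reduce to a convenient setting. Since $A$ has positive measure, there is a large ball $B(0,M)$ for which $A\cap B(0,M)$ still has positive, and now finite, measure, and it clearly suffices to prove the statement for this smaller set. Thus I may assume without loss of generality that $0<|A|<\infty$, so that $\mathbf 1_A\in L^1\cap L^2$.

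Next, I would recognize $\phi$ as the convolution $\mathbf 1_A\ast\check{\mathbf 1}_A$, with $\check f(y)=f(-y)$, and establish its continuity. The key estimate is
\begin{equation*}
	|\phi(z)-\phi(z')|\leq\|\mathbf 1_A\|_{L^2}\,\bigl\|\tau_z\mathbf 1_A-\tau_{z'}\mathbf 1_A\bigr\|_{L^2},
\end{equation*}
obtained from the Cauchy--Schwarz inequality, where $\tau_z f(x)=f(x-z)$ denotes translation. The continuity of $\phi$ then follows from the continuity of translation in $L^2$, which is the heart of the argument: it is proved by approximating $\mathbf 1_A$ in $L^2$ by a continuous, compactly supported function, for which translation is manifestly continuous, and then passing to the limit. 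This step, resting on the density of $C_c$ in $L^2$ together with a uniform-continuity argument, is where the real work lies; everything else is formal.

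Finally, since $\phi(0)=|A|>0$ and $\phi$ is continuous, there exists a radius $\delta>0$ such that $\phi(z)>0$ for all $z\in B(0,\delta)$. For each such $z$, the set $A\cap(A+z)$ has positive measure and is in particular nonempty, whence $z\in A-A$. Therefore $B(0,\delta)\subset A-A$, so $A-A$ is a neighborhood of the origin, as claimed. An alternative route, which avoids invoking the continuity of convolution, would rely on the Lebesgue density theorem: at a density point $x_0$ of $A$ the set fills more than half of every sufficiently small ball, so for $|z|$ small the sets $A$ and $A+z$ each occupy more than half of two largely overlapping balls and must consequently intersect.
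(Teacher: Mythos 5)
Your proposal is correct and takes essentially the same route as the paper: both arguments hinge on the convolution $\mathds{1}_A * \mathds{1}_{-A}$ (your $\phi$), its continuity, and the observation that positivity of this function at a point forces that point into $A-A$, so that continuity plus $\phi(0)=|A|>0$ yields a neighborhood of the origin inside $A-A$. You additionally spell out two details the paper leaves implicit---the reduction to $0<|A|<\infty$ and the proof of continuity via $L^2$-continuity of translations---but these are refinements of the same argument rather than a different one.
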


We give here a simple justification of this result. This modern elementary proof is standard and is part of mathematical folklore. Unfortunately, we do not know who should be credited for its original idea. Another short and insightful justification of Steinhaus's Theorem can be found in \cite{kestelman47} and in the proof of Theorem 2.3 in \cite{r09}.

\begin{proof}
	We consider the convolution
	\begin{equation*}
		f(x)=\mathds{1}_A * \mathds{1}_{-A}(x)
		=\int_{\mathbb{R}^d}\mathds{1}_A(y)\mathds{1}_A(x+y)dy,
	\end{equation*}
	which is a continuous function. It is readily seen that, if $f(x)>0$, there must exist $y\in A$ such that $x+y\in A$, thereby implying that $x=(x+y)-y\in A-A$. In other words, if $x\notin A-A$, then $f(x)=0$.
	
	In particular, noticing that $f(0)=|A|>0$, we deduce that $\{f(x)>\frac{|A|}2\}$ is an open neighborhood of the origin which is contained in $A-A$. This completes the proof of the lemma.
\end{proof}

An application of Steinhaus's Theorem leads to the automatic continuity of homomorphisms. This is the content of the next lemma.

\begin{lemma}[Automatic continuity of homomorphisms]
	Let $h:[-a,a]\to \mathbb{R}$, for some $a>0$, be Borel measurable and such that
	\begin{equation*}
		h(x+y)=h(x)+h(y),
	\end{equation*}
	for all $x,y\in[-a,a]$, with $x+y\in [-a,a]$. Then, it is continuous at the origin.
\end{lemma}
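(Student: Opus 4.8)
The plan is to combine the measurability of $h$ with Steinhaus's Theorem to produce a neighborhood of the origin on which $h$ is bounded, and then to promote this local boundedness to genuine continuity at the origin by exploiting the additive (hence $\mathbb{Q}$-homogeneous) structure of $h$. The whole argument will be carried out inside a sub-interval of $[-a,a]$ so that every invocation of the additivity hypothesis is legitimate.

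First I would observe that, since $h$ is real-valued and Borel measurable, the level sets $E_N=\{x\in[-\frac a2,\frac a2]:|h(x)|\le N\}$ form an increasing sequence of measurable sets whose union is all of $[-\frac a2,\frac a2]$. By countable additivity of Lebesgue measure, at least one $E=E_N$ must have positive measure. Working inside $[-\frac a2,\frac a2]$ is deliberate: it guarantees that the difference of any two of its points remains in $[-a,a]$, which is precisely where the functional relation is available. Next I would apply Steinhaus's Theorem to $E$, so that the difference set $E-E$ contains an interval $(-\delta,\delta)$, where I may assume $\delta\le a$. For any $z\in(-\delta,\delta)$, I write $z=x-y$ with $x,y\in E$; since $x,y\in[-\frac a2,\frac a2]$ forces $x-y\in[-a,a]$, the identity $h(x)=h((x-y)+y)=h(x-y)+h(y)$ is admissible and yields $h(z)=h(x)-h(y)$, whence $|h(z)|\le 2N$. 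Thus $h$ is bounded by $2N$ on the whole neighborhood $(-\delta,\delta)$ of the origin.

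Finally I would upgrade this boundedness to continuity through a rescaling argument. Additivity first gives $h(0)=0$, since $h(0)=h(0+0)=2h(0)$, and then, by induction, $h(mz)=mh(z)$ for every positive integer $m$, provided all the partial sums $z,2z,\dots,mz$ stay in $[-a,a]$. Given $\varepsilon>0$, I fix an integer $m$ with $\frac{2N}{m}<\varepsilon$. Whenever $|z|<\frac{\delta}{m}$, each multiple satisfies $|kz|<\delta\le a$ for $k\le m$, so $mz\in(-\delta,\delta)$ and therefore $|h(z)|=\frac1m|h(mz)|\le\frac{2N}{m}<\varepsilon$. This shows that $h(z)\to 0=h(0)$ as $z\to 0$, which is exactly continuity at the origin.

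The main obstacle I anticipate is not any deep difficulty but rather the careful bookkeeping of the domain constraints: the additive relation only holds when the relevant sums and differences remain inside $[-a,a]$, so the argument must be arranged — by restricting attention to $[-\frac a2,\frac a2]$ and by shrinking $\delta$ so that all intermediate multiples $kz$ land in the admissible range — to ensure that every application of additivity is valid.
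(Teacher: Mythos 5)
Your proof is correct, and every application of additivity respects the domain constraint (restricting the level sets to $[-\frac a2,\frac a2]$ and shrinking $\delta$ so that all multiples $kz$ stay admissible takes care of this). However, it follows a genuinely different route from the paper's. The paper argues purely topologically, in the spirit of Kestelman and Rosendal: given an arbitrary neighborhood $O$ of the origin, it chooses an open $U\ni 0$ with $U-U\subset O$, covers $[-a,a]$ by the countably many preimages $h^{-1}(q+U)$, $q\in\mathbb{Q}$, extracts one such preimage $V$ of positive measure, and applies Steinhaus's Theorem to conclude that $W=(V-V)\cap[-a,a]$ is a neighborhood of the origin satisfying $h(W)\subset h(V)-h(V)\subset U-U\subset O$ --- continuity in one step, with the positive-measure set tailored to the target neighborhood $O$. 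You instead fix once and for all a level set $E_N=\{|h|\le N\}$ of positive measure, use Steinhaus to get the quantitative bound $|h|\le 2N$ on a fixed interval $(-\delta,\delta)$, and then invoke additivity a second time, through the integer homogeneity $h(mz)=mh(z)$, to shrink this bound near the origin. Your two-stage argument is slightly longer but has a concrete payoff: it makes transparent the remark the paper attributes to Kestelman, namely that measurability can be weakened to the assumption that $h$ is merely bounded on a set of positive measure --- in your proof, measurability is used only to produce $E_N$, and never again. The paper's argument, by contrast, avoids all quantitative bounds and is the one that generalizes naturally to automatic continuity of homomorphisms between topological groups, where no notion of ``boundedness'' or integer rescaling is available.
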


We give a brief justification of this lemma based on an adaptation of the proof of Theorem 1 from \cite{kestelman47} (see also the proof of Theorem 2.2 in \cite{r09}). As pointed out in \cite{kestelman47}, this lemma remains valid if the measurability assumption on $h$ is weakened by only requiring that $h$ be bounded on a set of positive measure.

\begin{proof}
	Let $O\subset\mathbb{R}$ be an open set containing the origin and take another open set $U\subset O$ containing the origin such that $U-U\subset O$. Since $\cup_{q\in\mathbb{Q}}h^{-1}(q+U)=[-a,a]$, there must exist $q_0\in\mathbb{Q}$ such that $V=h^{-1}(q_0+U)$ is of positive measure. Therefore, by Steinhaus's Theorem, the measurable set $W=(V-V)\cap [-a,a]$ is a neighborhood of the origin. Moreover, since $h$ is additive, we notice that $h(W)\subset h(V)-h(V)\subset U-U\subset O$. It therefore follows that $h$ is continuous at the origin.
\end{proof}

It is readily seen that the combination of the preceding results yields Cauchy's result for measurable functions, as stated in the next corollary. We will make use of this fundamental result in the proof of Theorem \ref{thm:quantum}, below.

\begin{corollary}[Cauchy's functional equation for measurable functions]\label{cauchy:measurable}
	Let $h:[-a,a]\to \mathbb{R}$, for some $a>0$, be Borel measurable and such that
	\begin{equation*}
		h(x+y)=h(x)+h(y),
	\end{equation*}
	for all $x,y\in[-a,a]$, with $x+y\in [-a,a]$. Then, there exists a constant $c\in\mathbb{R}$ such that $h(x)=cx$, for all $x\in[-a,a]$.
\end{corollary}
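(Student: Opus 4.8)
The plan is to obtain the result as an immediate consequence of the two preceding lemmas, applied in succession. The hypotheses of the corollary---that $h:[-a,a]\to\mathbb{R}$ is Borel measurable and additive in the sense that $h(x+y)=h(x)+h(y)$ whenever $x,y,x+y\in[-a,a]$---are precisely the hypotheses of the Lemma on automatic continuity of homomorphisms. Accordingly, I would begin by invoking that lemma to deduce that $h$ is continuous at the origin.

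Next, observing that $0\in[-a,a]$, I would apply the Theorem on Cauchy's functional equation, whose conclusion requires only that $h$ be additive and continuous at a single point $x_0\in[-a,a]$. Taking $x_0=0$, the theorem furnishes a constant $c\in\mathbb{R}$ such that $h(x)=cx$ for every $x\in[-a,a]$, which is exactly the assertion of the corollary.

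There is no genuine obstacle here: the corollary is designed to be a direct synthesis of the measurable-to-continuous bridge provided by automatic continuity with the classical continuous result. The only point meriting a brief check is that the additivity hypothesis, together with the domain constraint $x+y\in[-a,a]$, is identical across all three statements, so that nothing is lost when the two results are chained together. Since this constraint appears verbatim in each, the compatibility is immediate and the proof reduces to a one-line composition of the earlier results.
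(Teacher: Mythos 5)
Your proof is correct and matches the paper's intent exactly: the paper states the corollary as an immediate combination of the automatic continuity lemma (giving continuity of $h$ at the origin) with the theorem on Cauchy's functional equation (applied at the point $x_0=0$), which is precisely the two-step composition you describe.
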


\section{Quantum collision invariants}\label{section:quantum}

As previously mentioned, there are several available proofs of the characterization of collision invariants in the classical Euclidean setting $\mathbb{R}^d$. However, these methods depend crucially on the vector structure of the Euclidean space $\mathbb{R}^d$ and, therefore, cannot be directly adapted to the spherical setting of quantum collision invariants. We present now a new approach which allows us to take into account the spherical geometry of quantized collisions and give a complete description of the associated collision invariants.

\subsection{In two dimensions}

It turns out that the resolution of the functional equation \eqref{quantum:collision:invariant} in the two-dimensional setting differs from the higher-dimensional case. Therefore, we treat these settings separately and start by providing a complete description of the case $d=2$.

\begin{proposition}\label{prop:quantum:2d}
	Let $d=2$ and consider a Borel measurable function $g:\partial B(0,R)\to\mathbb{R}$, for some fixed $R>0$. Suppose that $g$ is a collision invariant in the sense that it solves \eqref{quantum:collision:invariant}, for all admissible $\omega,\omega_*,\omega',\omega_*'\in \partial B(0,R)$ such that \eqref{conservation:momentum} holds true.
	
	Then, there is $A\in\mathbb{R}$ such that
	\begin{equation*}
		g(\omega)+g(-\omega)=A,
	\end{equation*}
	for all $\omega\in \partial B(0,R)$.
\end{proposition}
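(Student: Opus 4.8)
The plan is to exploit the rigidity of the momentum-conservation constraint \eqref{conservation:momentum} in the planar case, which turns out to render the functional equation \eqref{quantum:collision:invariant} essentially vacuous except along antipodal collisions.

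First I would fix a nonzero admissible momentum $s=\omega+\omega_*\neq 0$ and determine all pairs $(\omega,\omega_*)\in\partial B(0,R)\times\partial B(0,R)$ realizing it. Writing the common midpoint $m=s/2$, the conditions $|\omega|=|\omega_*|=R$ together with $\omega_*=2m-\omega$ force, after expanding $|2m-\omega|^2=R^2$ and using $|\omega|^2=R^2$, the orthogonality relation $m\cdot(\omega-m)=0$. In other words, $\omega$ lies on the line through $m$ perpendicular to $m$; intersecting this line with the sphere yields exactly the two points $m\pm\sqrt{R^2-|m|^2}\,{\bf n}$, where ${\bf n}$ is a unit vector orthogonal to $m$. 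Since these two points are precisely $\omega$ and $\omega_*$, the unordered pair $\{\omega,\omega_*\}$ is uniquely determined by $s$. This is the decisive two-dimensional phenomenon: the circles of radius $R$ centered at the origin and at $s$ meet in at most two points, so there is no freedom left to choose distinct outgoing velocities.

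Consequently, whenever $\omega+\omega_*=\omega'+\omega_*'=s\neq 0$, the pairs $\{\omega,\omega_*\}$ and $\{\omega',\omega_*'\}$ coincide, so \eqref{quantum:collision:invariant} holds automatically and carries no information. The only genuine constraints arise from the degenerate case $s=0$, where the midpoint is the origin and the analysis above breaks down: here $\omega+\omega_*=0$ is realized by every antipodal pair $(\omega,-\omega)$ with $\omega\in\partial B(0,R)$. I would then specialize \eqref{quantum:collision:invariant} to this antipodal family. Taking $\omega_*=-\omega$ and $\omega_*'=-\omega'$ for arbitrary $\omega,\omega'\in\partial B(0,R)$, both sides satisfy \eqref{conservation:momentum} with common sum $0$, so the functional equation becomes $g(\omega)+g(-\omega)=g(\omega')+g(-\omega')$. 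This says precisely that $\omega\mapsto g(\omega)+g(-\omega)$ is constant on the circle; defining $A$ to be this common value completes the proof.

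I do not anticipate a serious analytic obstacle here: no appeal to Cauchy's equation or even to the measurability of $g$ is needed at this stage, the content being entirely the geometric rigidity of planar collisions. The main point to get right is the uniqueness argument for $s\neq 0$, and in particular the verification that the antipodal case is the sole source of nontrivial relations. This also explains why the two-dimensional conclusion is weaker than the full affine characterization $g(\omega)=A+B\cdot\omega$ obtained in higher dimensions, where richer families of admissible collisions are available.
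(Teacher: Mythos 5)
Your proof is correct and takes essentially the same approach as the paper: the only nontrivial instances of \eqref{quantum:collision:invariant} are the antipodal collisions $\omega_*=-\omega$, $\omega_*'=-\omega'$, which force the even part $g(\omega)+g(-\omega)$ to be constant. The sole difference is that you spell out, via the circle-intersection (midpoint orthogonality) argument, the rigidity fact for $\omega+\omega_*\neq 0$ that the paper simply declares ``readily seen.''
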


\begin{proof}
	We begin by determining which velocities are admissible. To that end, suppose first that $\omega_*\neq -\omega$. In that case, it is readily seen that $(\omega',\omega_*')=(\omega,\omega_*)$ or $(\omega',\omega_*')=(\omega_*,\omega)$, whereby \eqref{quantum:collision:invariant} is automatically satisfied for any function $g$. There is therefore no helpful information to extract from that case.
	
	The case $\omega_*=-\omega$ is more interesting. It yields that
	\begin{equation*}
		g(\omega)+g(-\omega)=g(\omega')+g(-\omega'),
	\end{equation*}
	for all velocities $\omega$ and $\omega'$ on the circle $\partial B(0,R)$. This establishes that the even part of $g$ is constant, which concludes the proof.
\end{proof}

\begin{remark}
	Notice that, in the two-dimensional case, any function with a constant even part is a quantum collision invariant. In particular, two-dimensional collision invariants have no constraint on their odd part.
\end{remark}

\subsection{In three and higher dimensions}

We give now a complete characterization of solutions to the functional equation \eqref{quantum:collision:invariant} in dimensions $d\geq 3$.

\begin{theorem}\label{thm:quantum}
	Let $d\geq 3$ and consider a Borel measurable function $g:\partial B(0,R)\to\mathbb{R}$, for some fixed $R>0$. Suppose that $g$ is a collision invariant in the sense that it solves \eqref{quantum:collision:invariant}, for all admissible $\omega,\omega_*,\omega',\omega_*'\in \partial B(0,R)$ such that \eqref{conservation:momentum} holds true.
	
	Then, there are $A\in\mathbb{R}$ and $B\in\mathbb{R}^d$ such that
	\begin{equation*}
		g(\omega)=A+B\cdot\omega,
	\end{equation*}
	for all $\omega\in \partial B(0,R)$.
\end{theorem}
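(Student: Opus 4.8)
The plan is to recast the spherical functional equation as Cauchy's equation on the real line, so that Corollary~\ref{cauchy:measurable} can be applied. First I would dispose of the even part of $g$. Choosing $\omega_*=-\omega$ and $\omega_*'=-\omega'$ makes both sides of \eqref{conservation:momentum} vanish, so the pair is admissible for arbitrary $\omega,\omega'$, and \eqref{quantum:collision:invariant} yields
\begin{equation*}
	g(\omega)+g(-\omega)=g(\omega')+g(-\omega'),
\end{equation*}
for all $\omega,\omega'\in\partial B(0,R)$. Hence the even part of $g$ equals a constant, which we call $A$. Since $g-A$ still solves \eqref{quantum:collision:invariant}, I may assume from now on that $g$ is odd, and it remains to produce $B\in\mathbb{R}^d$ with $g(\omega)=B\cdot\omega$.

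Next I would encode the equation through the sum variable. For every $s$ with $0<|s|<2R$ the set of admissible pairs $(\omega,\omega_*)$ with $\omega+\omega_*=s$ is nonempty, and \eqref{quantum:collision:invariant}--\eqref{conservation:momentum} say precisely that the value $g(\omega)+g(\omega_*)$ depends on $s$ alone. I therefore set
\begin{equation*}
	\Psi(s)=g(\omega)+g(\omega_*),\qquad\omega+\omega_*=s,\quad\omega,\omega_*\in\partial B(0,R),
\end{equation*}
which is well defined by collision invariance, odd because $g$ is odd, and Borel measurable because one can select the pair $(\omega,\omega_*)$ measurably in $s$ (for instance through the explicit representation recalled in the introduction, with $\mathbf n$ chosen measurably by Gram--Schmidt).

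The heart of the matter is to prove that $\Psi$ is additive. Given two linearly independent vectors $s_1,s_2$, I would seek a point $a\in\partial B(0,R)$ for which $a+s_1$ and $s_2-a$ also lie on $\partial B(0,R)$; equivalently, $a$ must satisfy the two linear constraints $2a\cdot s_1+|s_1|^2=0$ and $2a\cdot s_2-|s_2|^2=0$. These cut out a codimension-two affine subspace, and its intersection with $\partial B(0,R)$ is nonempty exactly because $d\geq3$ --- this is where the hypothesis on the dimension enters, and it is precisely the step that fails in the degenerate two-dimensional situation of Proposition~\ref{prop:quantum:2d}. Writing $a_1=a+s_1$ and $b_2=s_2-a$, and using oddness to cancel $g(a)+g(-a)=0$, I obtain
\begin{equation*}
	\Psi(s_1)+\Psi(s_2)=\big(g(a_1)+g(-a)\big)+\big(g(a)+g(b_2)\big)=g(a_1)+g(b_2)=\Psi(a_1+b_2)=\Psi(s_1+s_2),
\end{equation*}
since $a_1+b_2=s_1+s_2$. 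A parallelogram manipulation --- adding a small auxiliary increment $w$ orthogonal to the direction under consideration, again available only because $d\geq3$ --- removes the independence restriction and propagates additivity over the whole admissible range $\{0<|s|<2R\}$.

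Finally, restricting $\Psi$ to each coordinate axis produces measurable additive functions of one real variable; Corollary~\ref{cauchy:measurable} gives $\Psi(te_i)=c_it$ on each axis, and recombining through additivity yields $\Psi(s)=B\cdot s$ with $B=(c_1,\dots,c_d)$. Then $g(\omega)+g(\omega_*)=B\cdot(\omega+\omega_*)$ for all admissible pairs; fixing $\omega_*$ and letting $\omega$ vary shows that $g(\omega)-B\cdot\omega$ is constant, and oddness forces this constant to vanish, so $g(\omega)=B\cdot\omega$. Restoring the previously subtracted constant gives $g(\omega)=A+B\cdot\omega$. I expect the additivity step to be the main obstacle, both for the nonemptiness of the sphere--subspace intersection, which is the genuine point of departure from the two-dimensional case, and for the bookkeeping needed to upgrade additivity from independent vectors to the full admissible range before Cauchy's theorem can be invoked.
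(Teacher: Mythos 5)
Your reduction to an odd $g$ and the definition of $\Psi$ on the sum variable are sound, but the additivity step contains a genuine error: the claim that the codimension-two affine subspace $\{a : 2a\cdot s_1+|s_1|^2=0,\ 2a\cdot s_2-|s_2|^2=0\}$ meets $\partial B(0,R)$ whenever $d\geq 3$ and $s_1,s_2$ are linearly independent is false. Subtracting the two constraints gives $a\cdot(s_2-s_1)=\frac{1}{2}\left(|s_1|^2+|s_2|^2\right)$, so by Cauchy--Schwarz any solution with $|a|=R$ requires
\begin{equation*}
	|s_2-s_1|\geq \frac{|s_1|^2+|s_2|^2}{2R}.
\end{equation*}
This fails for nearly parallel vectors of moderate length, in every dimension: take $|s_1|=|s_2|=R$ with angle $\theta<\pi/3$ between them; then $|s_2-s_1|=2R\sin(\theta/2)<R$, yet $|s_1|,|s_2|,|s_1+s_2|<2R$, so $\Psi(s_1)$, $\Psi(s_2)$ and $\Psi(s_1+s_2)$ are all perfectly well defined while no admissible ``chain point'' $a$ exists. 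The obstruction is therefore not the dimension (which only guarantees nonemptiness once the distance condition above holds); it is a genuine geometric constraint on the pair $(s_1,s_2)$, and the set of pairs for which your construction works is strictly smaller than you assert.

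This matters because your endgame relies precisely on the missing cases: restricting $\Psi$ to a coordinate axis requires additivity for \emph{parallel} vectors $te_i$, $t'e_i$, which is exactly the regime the chain construction never covers, and the nearly parallel regime needed to approximate it is not covered either. The ``parallelogram manipulation'' you invoke to bridge this is the actual heart of the proof, and it is not carried out: one would have to identify the correct admissibility condition above, verify it for the auxiliary decompositions such as $te_i=(te_i-w)+w$ with $w$ orthogonal (where it imposes quantitative restrictions on $|w|$ relative to $t$ and $t'$), and then still propagate linearity from a restricted range of arguments to all of $[-2R,2R]$ before Corollary \ref{cauchy:measurable} closes the argument. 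Until that is done, the proof is incomplete at its central step. For comparison, the paper sidesteps the issue entirely: it observes that flipping the sign of any fixed coordinate of all four vectors preserves admissibility of \eqref{conservation:momentum}, decomposes $g$ into components with prescribed parity in each coordinate, shows the components odd in two or more coordinates vanish, and reduces each remaining component to a one-variable Cauchy equation through an explicit construction of admissible quadruples realizing arbitrary $s+t=u+v$ in one coordinate --- a construction that only needs two spare coordinates ($d\geq 3$) and never encounters the near-parallel obstruction.
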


\begin{remark}
	Note that any function on the sphere $\partial B(0,R)$ of the form $g(\omega)=A+B\cdot\omega$ solves the functional equation \eqref{quantum:collision:invariant}. Therefore, the preceding theorem gives a full description of quantum collision invariants.
\end{remark}

\begin{proof}
	By considering $g(R\omega)$, with $\omega\in\mathbb{S}^{d-1}$, instead of $g(\omega)$, with $\omega\in\partial B(0,R)$, we first reduce the problem to the case $R=1$, which will allow for a more convenient use of notation.
	
	Now, for any set of quantized velocities $\omega,\omega_*,\omega',\omega_*'\in\mathbb{S}^{d-1}$, which are admissible in the sense that they satisfy \eqref{conservation:momentum}, we observe that fixing $i\in\{1,\ldots,d\}$ and changing the sign of the $i$th coordinate of each vector produces a new set of admissible quantized vectors. Indeed, this follows from the elementary facts that \eqref{conservation:momentum} is a coordinatewise relation and that the norm of a vector does not detect a change of sign in any coordinate.
	
	This simple observation allows us to uniquely decompose $g$ into a sum of collision invariants
	\begin{equation}\label{decomposition:1}
		g=\sum_{n=0}^d
		\sum_{\substack{I=\{i_1,i_2,\ldots,i_n\}
		\\
		1\leq i_1<i_2<\ldots<i_n\leq d}}g_I,
	\end{equation}
	where $g_I(\omega)$ is odd in each coordinate $\omega_i$, if $i\in I$, and even in $\omega_i$, if $i\in \{1,\ldots, d\}\setminus I$. (Note that it is implied that $I=\emptyset$ when $n=0$.) This is achieved by applying successive decompositions of $g$ into its odd and even components, in each coordinate.
	
	To be more precise, one can also give the explicit formula
	\begin{equation*}
		g_I(\omega)=2^{-d}\sum_{\sigma\in\{1,-1\}^d}
		\sigma_{i_1}\sigma_{i_2}\cdots\sigma_{i_n}g(\sigma_1\omega_1,\sigma_2\omega_2,\ldots,\sigma_d\omega_d),
	\end{equation*}
	which clearly expresses $g_I$ as a linear combination of collision invariants, thereby showing that $g_I$ is itself a collision invariant. In particular, it is then readily seen that $g_I$ has the required symmetry properties in each coordinate.
	
	We are now going to characterize each $g_I$ for different values of $n\in\{0,1,\ldots,d\}$. First of all, if $n\geq 2$, we claim that $g_I=0$. To see this, suppose, without loss of generality and for mere convenience of notation, that $\{1,2\}\subset I$. Then, since $g_I$ is odd in each coordinate $\omega_1$ and $\omega_2$, we deduce that it is even in $(\omega_1,\omega_2)$, i.e.,
	\begin{equation*}
		g_I(-\omega_1,-\omega_2,\widetilde \omega)=g_I(\omega_1,\omega_2,\widetilde \omega),
	\end{equation*}
	where $\widetilde\omega=(\omega_3,\ldots,\omega_d)$ denotes the remaining coordinates. Therefore, since $g_I$ is a collision invariant, we find that
	\begin{equation*}
		\begin{aligned}
			2g_I(\omega_1,\omega_2,\widetilde \omega)
			&=g_I(\omega_1,\omega_2,\widetilde \omega)+g_I(-\omega_1,-\omega_2,\widetilde \omega)
			\\
			&=g_I(-\omega_1,\omega_2,\widetilde \omega)+g_I(\omega_1,-\omega_2,\widetilde \omega)
			=-2g_I(\omega_1,\omega_2,\widetilde \omega),
		\end{aligned}
	\end{equation*}
	which necessarily implies that $g_I(\omega)=0$, for all $\omega\in\mathbb{S}^{d-1}$.
	
	This allows us to reduce \eqref{decomposition:1} to the simpler decomposition
	\begin{equation}\label{decomposition:2}
		g=g_0+\sum_{i=1}^dg_i,
	\end{equation}
	where we denote $g_0=g_{\emptyset}$, which corresponds to the case $n=0$, and $g_i=g_{\{i\}}$, which corresponds to the case $n=1$.
	
	Next, since $g_0$ is a collision invariant which is even in each coordinate, we see, by considering antipodal points on the sphere, that
	\begin{equation*}
		2g_0(\omega)
		=g_0(\omega)+g_0(-\omega)
		=g_0(\xi)+g_0(-\xi)=2g_0(\xi),
	\end{equation*}
	for all $\omega,\xi\in\mathbb{S}^{d-1}$. This establishes that $g_0$ is a constant function $g_0\equiv A$, for some $A\in\mathbb{R}$.
	
	We now move on to the characterization of $g_i$, for each $i\in\{1,\ldots,d\}$. Recall that $g_i(\omega)$ is odd in $\omega_i$ and even in all other coordinates. We claim that there exists $B_i\in\mathbb{R}$ such that
	\begin{equation}\label{linear}
		g_i(\omega)=B_i\omega_i,
	\end{equation}
	for all $\omega\in\mathbb{S}^{d-1}$.
	
	For simplicity, we focus on the case $i=1$ and write $\omega=(\omega_1,\widetilde \omega)$, with $\widetilde\omega=(\omega_2,\ldots,\omega_d)$, for any $\omega\in\mathbb{S}^{d-1}$. Thus, since $g_1$ is a collision invariant which is odd in $\omega_1$ and even in $\widetilde\omega$, we deduce that
	\begin{equation*}
		\begin{aligned}
			2g_1(\omega_1,\widetilde\omega)
			&=g_1(\omega_1,\widetilde\omega)+g_1(\omega_1,-\widetilde\omega)
			\\
			&=g_1(\omega_1,\widetilde\xi)+g_1(\omega_1,-\widetilde\xi)
			=2g_1(\omega_1,\widetilde\xi),
		\end{aligned}
	\end{equation*}
	for all $(\omega_1,\widetilde\omega),(\omega_1,\widetilde\xi)\in\mathbb{S}^{d-1}$. It follows that $g_1(\omega)$ is independent of $\widetilde \omega$ and, therefore, one can write $g_1(\omega)=h_1(\omega_1)$, where $h_1:[-1,1]\to\mathbb{R}$ is an odd measurable function.
	
	Next, we consider any $s,t,u,v\in [-1,1]$ such that $s+t=u+v$. We want to show that
	\begin{equation}\label{functional:1}
		h_1(s)+h_1(t)=h_1(u)+h_1(v).
	\end{equation}
	To that end, without loss of generality, we can assume that $s\leq u\leq v\leq t$ and $s\neq t$. We are now going to construct $\sigma,\tau,\mu,\nu\in\mathbb{R}^{d-1}$ such that $(s,\sigma)$, $(t,\tau)$, $(u,\mu)$ and $(v,\nu)$ belong to $\mathbb{S}^{d-1}$, and $\sigma+\tau=\mu+\nu$. Since $d\geq 3$, a simple geometric argument readily shows that such values always exist. However, for clarity, we also provide now explicit analytical expressions for each value $\sigma$, $\tau$, $\mu$ and $\nu$, in terms of $s$, $t$, $u$ and $v$. This construction is not unique.
	
	Specifically, we first set
	\begin{equation*}
		\begin{aligned}
			\sigma&=(\sqrt{1-s^2},0,\ldots,0),
			\\
			\tau&=(\sqrt{1-t^2},0,\ldots,0),
		\end{aligned}
	\end{equation*}
	and introduce the parameter
	\begin{equation*}
		\lambda=\frac 12\left(1+\frac{v-u}{t-s}\right).
	\end{equation*}
	Note that $\frac 12\leq\lambda\leq 1$ and
	\begin{equation*}
		\begin{aligned}
			\lambda s+(1-\lambda) t&=u,
			\\
			(1-\lambda)s+\lambda t&=v.
		\end{aligned}
	\end{equation*}
	In particular, by concavity of the function $z\mapsto\sqrt{1-z^2}$, it holds that
	\begin{equation*}
		\begin{aligned}
			\lambda \sqrt{1-s^2}+(1-\lambda) \sqrt{1-t^2}&\leq \sqrt{1-u^2},
			\\
			(1-\lambda) \sqrt{1-s^2}+\lambda \sqrt{1-t^2}&\leq \sqrt{1-v^2}.
		\end{aligned}
	\end{equation*}
	This allows us to define the vectors
	\begin{equation*}
		\begin{aligned}
			\mu&=\left(\lambda\sqrt{1-s^2}+(1-\lambda)\sqrt{1-t^2},-\sqrt{1-u^2-\left(\lambda\sqrt{1-s^2}+(1-\lambda)\sqrt{1-t^2}\right)^2},0,\ldots,0\right),
			\\
			\nu&=\left((1-\lambda)\sqrt{1-s^2}+\lambda\sqrt{1-t^2},\sqrt{1-v^2-\left((1-\lambda)\sqrt{1-s^2}+\lambda\sqrt{1-t^2}\right)^2},0,\ldots,0\right).
		\end{aligned}
	\end{equation*}
	Observe that the assumption that the dimension is at least three is essential in this step.
	
	It is then straightforward to verify that
	\begin{equation*}
		s^2+|\sigma|^2=t^2+|\tau|^2=u^2+|\mu|^2=v^2+|\nu|^2=1.
	\end{equation*}
	Moreover, exploiting that $s+t=u+v$, another simple computation establishes that
	\begin{equation*}
		1-u^2-\left(\lambda\sqrt{1-s^2}+(1-\lambda)\sqrt{1-t^2}\right)^2
		=
		1-v^2-\left((1-\lambda)\sqrt{1-s^2}+\lambda\sqrt{1-t^2}\right)^2,
	\end{equation*}
	which then easily leads to the relation $\sigma+\tau=\mu+\nu$.
	
	All in all, since $g_1$ is a collision invariant and the vectors $(s,\sigma)$, $(t,\tau)$, $(u,\mu)$ and $(v,\nu)$ on the sphere satisfy the conservation of momentum \eqref{conservation:momentum}, we conclude that \eqref{functional:1} holds true. Therefore, further noticing that $h_1(0)=0$, for $h_1$ is odd, we infer that
	\begin{equation*}
		h_1(x)+h_1(y)=h_1(x+y),
	\end{equation*}
	for all $x,y\in[-1,1]$ such that $x+y\in [-1,1]$. This is the well-known Cauchy functional equation, which, by Corollary \ref{cauchy:measurable}, implies that $h_1$ is a linear function, thereby establishing \eqref{linear}.
	
	We have now completed the characterization of each term in the decomposition \eqref{decomposition:2}. In particular, introducing the vector $B=(B_1,\ldots,B_d)\in\mathbb{R}^d$ shows that $g(\omega)=A+B\cdot\omega$, which completes the proof of the theorem.
\end{proof}

{\small
    
}

\EditInfo{January 2, 2024}{March 7, 2024}{Ana Cristina Moreira Freitas, Carlos Florentino, Diogo Oliveira e Silva, and Ivan Kaygorodov}

\end{document}